\newcommand{\Vv}{\boldsymbol{v}}
\newcommand{\bN}{\mathbb{N}}
\newcommand{\bR}{\mathbb{R}}
\newcommand{\bP}{\mathbb{P}}
\newcommand{\bC}{\mathbb{C}}
\newcommand{\bD}{\mathbb{D}}
\newcommand{\bZ}{\mathbb{Z}}
\newcommand{\bQ}{\mathbb{Q}}
\newcommand{\rd}{\mathrm{d}}
\newcommand{\loc}{\mathrm{loc}}
\theoremstyle{plain}
\newtheorem{mainth}{Theorem}
\theoremstyle{definition}
\newtheorem*{notation}{Notation}
\newtheorem*{acknowledgement}{Acknowledgement}
\theoremstyle{remark}
\numberwithin{equation}{section}
\begin{document} 
\title[]{Equidistribution of the zeros of higher order derivatives in polynomial dynamics}

\author{Y\^usuke Okuyama}
\address{Division of Mathematics, Kyoto Institute of Technology, Sakyo-ku, Kyoto 606-8585 JAPAN}
\email{okuyama@kit.ac.jp}

\date{\today}

\subjclass[2010]{Primary 37F10: Secondary 30D05}

\keywords{equidistribution, higher order derivatives, iterated polynomial,
Schr\"oder equation, Abel equation, complex dynamics}

\begin{abstract}
For every $m\in\bN$, we establish the convergence of the
averaged distributions of the zeros of
the $m$-th order derivatives $(f^n)^{(m)}$ 
of the iterated polynomials $f^n$ of a polynomial $f\in\bC[z]$ of degree $>1$ 
towards the harmonic measure of the filled-in Julia set of $f$ with pole at $\infty$
as $n\to+\infty$, when $f$ has no exceptional points in $\bC$.
This complements our former study on the zeros of 
$(f^n)^{(m)}-a$ for any value $a\in\bC\setminus\{0\}$.
The key in the proof is an approximation of the higher order derivatives of a solution of the Schr\"oder or Abel functional equations for a meromorphic 
function on $\bC$
with a locally uniform non-trivial error estimate.
\end{abstract}

\maketitle 

\section{Introduction}\label{sec:intro}
Let $f\in\bC[z]$ be a polynomial of degree $d>1$. The {\em filled-in Julia set} 
\begin{gather*}
K(f):=\Bigl\{z\in\bC:\limsup_{n\to+\infty}|f^n(z)|<+\infty\Bigr\} 
\end{gather*}
of $f$ is a non-polar compact subset in $\bC$, where $f^n$ denotes the $n$-th iteration of $f$ for each $n\in\bN=\{1,2,3,\ldots\}$. 
Let $g_f$ be the Green function of $K(f)$ with pole at $\infty$
(for the details on potential theory, see e.g.\ \cite[\S4.4]{Ransford95}), 
where the projective line $\bP^1=\bP^1(\bC)$ is regarded as the Riemann sphere $\bC\cup\{\infty\}$ fixing an affine chart of $\bP^1$.
The function $g_f$ extends to $\bC$ continuously by setting $\equiv 0$ on $K(f)$ (so that 
$\bC\setminus K(f)=\{g_f>0\}$).
The difference $g_f-(\log\max\{1,|f^n|\})/d^n$ on $\bC$
is harmonic and bounded near $\infty$ so it admits a harmonic
extension near $\infty$ for each $n\in\bN$, and then
is $=O(d^{-n})$ as $n\to+\infty$ on $\bP^1$ uniformly.
The {\em harmonic measure} of $K(f)$ with pole at $\infty$
is the probability measure
\begin{gather*}
\mu_f:=\Delta g_f\quad\text{on }\bC,
\end{gather*}
which has no atoms on $\bP^1$ and is supported by $\partial(K(f))$.
The {\em exceptional set} of the polynomial $f$ is defined as
\begin{gather*}
 E(f):=\Bigl\{a\in\bP^1:\#\bigcup_{n\in\bN\cup\{0\}}f^{-n}(a)<+\infty\Bigr\},
\end{gather*}
which consists of $\infty$ (indeed $f^{-1}(\infty)=\{\infty\}$)
and at most one point in $\bC$, and
a point $b\in\bC$ belongs to $E(f)$ if and only if $f(z)=A(z-b)^d+b$ 
for some $A\in\bC\setminus\{0\}$ (and then $f^{-1}(b)=\{b\}$).
The measure $\mu_f$ is characterized as the unique probability measure $\nu$ on $\bP^1$
such that 
\begin{gather*}
 f^*\nu=d\cdot\nu\quad\text{on }\bP^1\quad\text{and that}\quad \nu(E(f))=0, 
\end{gather*}
where $f^*$ is the pullback operator 
on the space of complex Radon measure on $\bP^1$ induced by $f$
(see Notation below for the details). 
Brolin \cite{brolin} studied the value distribution
of the iteration sequence $(f^n)$ of $f$,
and established that
{\em for every $a\in\bP^1\setminus E(f)$,}
\begin{gather}
 \lim_{n\to+\infty}\frac{(f^n)^*\delta_a}{d^n}=\mu_f\quad\text{\em weakly on }\bP^1,\label{eq:Brolin}
\end{gather}
where $\delta_z$ is the Dirac measure on $\bP^1$ 
at a point $z\in\bP^1$.
This equidistribution result \eqref{eq:Brolin} is foundational in the value distribution theory 
in complex dynamics; 
see e.g.\ \cite{BD99, DS08, FLM83, Lyubich83, RS97}.

In \cite{OVderivative},
for every $m\in\bN$, a similar equidistribution 
statement towards $\mu_f$ replacing $f^n$ with the $m$-th order derivative 
\begin{gather*}
 (f^n)^{(m)}=\frac{\rd^m}{\rd z^m}f^n(z) \in\bC[z]\quad\text{of degree }d^n-m
\end{gather*}
of $f^n$, $n\in\bN$, 
has been established for any value $a\in\bP^1\setminus\{0,\infty\}$
(and the $m=1$ case was first treated in \cite{GV16derivative, OkuyamaDerivatives}). In this value distribution result for the sequence
$((f^n)^{(m)})_n$ of the $m$-th order derivatives of iterated polynomials $f^n$,
the value $a=\infty$ is always exceptional 
since for any $n\in\bN$, 
$((f^n)^{(m)})^*\delta_\infty/(d^n-m)=\delta_\infty\neq\mu_f$ on $\bP^1$, and the 
(maybe most important) value
$a=0$ is also exceptional unless $E(f)=\{\infty\}$
(from the above description of the case $E(f)\neq\{\infty\}$).

Our principal result asserts that conversely, if $E(f)=\{\infty\}$, then
for any $m\in\bN$, the value $a=0$ is not exceptional
in our study of the value distributions of the higher order derivatives of iterated
polynomials.

\begin{mainth}\label{th:anyorder}
Let $f\in\bC[z]$ be a polynomial of degree $d>1$, and $m\in\bN$.
If $E(f)=\{\infty\}$, then
\begin{gather}
 \lim_{n\to\infty}\frac{\bigl((f^n)^{(m)}\bigr)^*\delta_0}{d^n-m}=\mu_f\quad\text{weakly on }\bP^1.\label{eq:zeros}
\end{gather}
\end{mainth}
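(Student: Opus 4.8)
The plan is to recast \eqref{eq:zeros} as an $L^1_{\loc}$ convergence of potentials, reduce it to comparing $(f^n)^{(m)}$ with $f^n$ itself, and carry out that comparison through an asymptotic analysis of $(f^n)^{(m)}$ along the linearizing coordinate of a repelling periodic point. Write $u_n:=\frac{1}{d^n}\log\bigl|(f^n)^{(m)}\bigr|$ on $\bC$. By the Poincar\'e--Lelong formula $\bigl((f^n)^{(m)}\bigr)^*\delta_0/(d^n-m)=\frac{1}{d^n-m}\Delta\log|(f^n)^{(m)}|$ is a probability measure on $\bP^1$ carried by $\bC$; by the Gauss--Lucas theorem its support lies in the convex hull of the zero set $f^{-n}(0)$ of the polynomial $f^n$, and since $E(f)=\{\infty\}$ Brolin's theorem \eqref{eq:Brolin} with $a=0$ keeps those convex hulls in a fixed compact subset of $\bC$, so no mass escapes to $\infty$. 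By the standard equivalence between weak convergence of such measures and $L^1_{\loc}$ convergence of their potentials (the normalization by the leading coefficient of $(f^n)^{(m)}$ matching that of $g_f$ at $\infty$), \eqref{eq:zeros} is equivalent to $u_n\to g_f$ in $L^1_{\loc}(\bC)$. Moreover, again because $E(f)=\{\infty\}$, Brolin's theorem with $a=0$ also gives $v_n:=\frac{1}{d^n}\log|f^n|\to g_f$ in $L^1_{\loc}(\bC)$. Hence it suffices to show
\[
u_n-v_n=\frac{1}{d^n}\log\Bigl|\frac{(f^n)^{(m)}}{f^n}\Bigr|\longrightarrow 0\quad\text{in }L^1_{\loc}(\bC).
\]

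Next I reduce this to a pointwise statement. Cauchy's estimate for $(f^n)^{(m)}$ on small circles, together with the uniform convergence $\frac{1}{d^n}\log\max\{1,|f^n|\}\to g_f$, shows that $(u_n)$ is a family of subharmonic functions locally uniformly bounded above; so every subsequence of $(u_n)$ has a sub-subsequence converging in $L^1_{\loc}(\bC)$ to a subharmonic $u_\infty$, and along a further sub-subsequence $u_n$ and $v_n$ converge almost everywhere. Consequently, if $u_n(z)-v_n(z)\to 0$ for every $z$ outside a Lebesgue-null set, then $u_\infty=g_f$ a.e., hence $u_\infty=g_f$ by subharmonicity, and therefore $u_n\to g_f$ in $L^1_{\loc}(\bC)$. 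So everything is reduced to proving $u_n(z)-v_n(z)\to 0$ off a null set, which I do by induction on $m$; the case $m=1$ is immediate from \eqref{eq:Brolin}, since the zeros of $(f^n)'$ form $\bigcup_{k<n}f^{-k}(\mathrm{Crit}(f)\cap\bC)$.

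The core is the inductive step. On $\bC\setminus K(f)$ it is elementary: here $f^{n-N}(z)\to\infty$, so in the Fa\`a di Bruno expansion of $(f^n)^{(m)}(z)=(f^N\circ f^{n-N})^{(m)}(z)$ the term $(f^N)^{(m)}(f^{n-N}(z))\cdot\bigl((f^{n-N})'(z)\bigr)^m$ dominates all the others exponentially, and dividing by $f^n(z)=f^N(f^{n-N}(z))$ and using the case $m=1$ for $(f^{n-N})'$ gives $u_n(z)-v_n(z)\to 0$. Near $\partial K(f)$ one proceeds differently. Repelling periodic points are dense in the compact set $\partial K(f)$, so finitely many of them, $\zeta_1,\dots,\zeta_k$, have linearizing neighbourhoods covering $\partial K(f)$. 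For $\zeta_i$ of period $p_i$ and multiplier $\Lambda_i$, $|\Lambda_i|>1$, let $\psi_i\colon\bC\to\bC$ be the associated linearizer, the solution of the Schr\"oder equation $f^{p_i}\circ\psi_i=\psi_i\circ(\Lambda_i\,\cdot\,)$ normalized by $\psi_i(0)=\zeta_i$, $\psi_i'(0)=1$; it is an entire function of finite order (in the borderline parabolic situations one uses instead a solution of the Abel equation, then merely meromorphic on $\bC$). Differentiating the identity $f^{p_in_0+r}\circ\psi_i=(f^r\circ\psi_i)\circ(\Lambda_i^{\,n_0}\,\cdot\,)$ by the Fa\`a di Bruno formula and successively eliminating the lower-order derivatives by the inductive hypothesis yields, locally uniformly for $w$ outside a discrete set,
\[
\frac{(f^n)^{(m)}(\psi_i(w))}{f^n(\psi_i(w))}=\frac{\psi_i^{(m)}(\Lambda_i^{\,n_0}w)}{\psi_i(\Lambda_i^{\,n_0}w)}\cdot\Theta_{i,n}(w)\cdot\bigl(1+o(1)\bigr),
\]
where $\Theta_{i,n}$ is a product of elementary factors ($\Lambda_i^{\,n_0m}$, $\psi_i'(w)^{-m}$, and $\zeta(f^r)'(\zeta)/f^r(\zeta)$ at $\zeta=\psi_i(\Lambda_i^{\,n_0}w)$) each of whose normalized logarithm is $o(1)$. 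Everything therefore rests on the estimate $\frac{1}{d^n}\log\bigl|\psi_i^{(m)}(\zeta)/\psi_i(\zeta)\bigr|\to 0$ along $\zeta=\Lambda_i^{\,n_0}w$, i.e.\ on a bound for the higher logarithmic derivatives of the finite-order function $\psi_i$; this is supplied by Nevanlinna's lemma on the logarithmic derivative, sharpened by a Borel--Nevanlinna-type argument into a locally uniform estimate off an exceptional set of measure zero. Transporting by the local biholomorphism $\psi_i$ gives $u_n-v_n\to 0$ on a neighbourhood of $\zeta_i$ off a null set, hence on a neighbourhood of $\partial K(f)$. Finally, the bounded Fatou components filling $\mathrm{int}\,K(f)$, being eventually periodic, are treated in the same spirit by linearizing $f$ at the relevant attracting, parabolic (Abel equation), or irrationally neutral cycle and propagating the coordinate over the whole component by the dynamics.

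Combining the three regions, $u_n(z)-v_n(z)\to 0$ for $z$ outside a set of Lebesgue measure zero; by the second step $u_n\to g_f$ in $L^1_{\loc}(\bC)$, and applying $\Delta$ gives \eqref{eq:zeros}. The one genuinely hard ingredient—the main obstacle—is the locally uniform estimate, off an exceptional set, for the higher logarithmic derivatives of the linearizer along the geometric orbit $(\Lambda_i^{\,n_0}w)_{n_0}$: precisely the ``approximation of the higher order derivatives of a solution of the Schr\"oder or Abel functional equations \ldots\ with a locally uniform non-trivial error estimate'' announced in the abstract. The remaining ingredients—Poincar\'e--Lelong, Gauss--Lucas, Cauchy estimates, Fa\`a di Bruno bookkeeping, and Brolin's theorem (the only place the hypothesis $E(f)=\{\infty\}$ is used)—are standard.
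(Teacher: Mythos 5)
Your outermost reductions (weak convergence of the zero measures $\Leftrightarrow$ convergence of $\log|(f^n)^{(m)}|/(d^n-m)$ to $g_f$ in $L^1_{\loc}$, compactness of locally upper-bounded families of subharmonic functions, induction on $m$ with the $m=1$ case supplied by Brolin) agree with the paper. But the heart of your argument is aimed at the wrong region of the plane and its key step does not close. The region where the convergence can actually fail is the \emph{interior} of $K(f)$: any subsequential limit $\phi$ already equals $g_f$ on $\overline{I_\infty(f)}\setminus\{\infty\}$, hence at every point of $\partial K(f)$, and the set $\{\phi<g_f\}$, if non-empty, is a union of bounded Fatou components. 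Your analysis at repelling periodic points therefore attacks a part that is already under control, while the bounded Fatou components --- where all of the paper's work lives (Sullivan's theorem, the Fatou classification, a separate descent argument in the superattracting case where no Schr\"oder linearizer with non-vanishing derivative exists and $(f^n)^{(m)}$ genuinely decays super-exponentially, and Theorem \ref{th:convergence} on the attracting/parabolic basin) --- are dispatched in one sentence. They are not ``in the same spirit''; in particular your pointwise-a.e.\ formulation would also force you to handle Siegel disks, which the paper never needs to because $f^{n_j}\to\mathrm{const}$ on the offending component rules them out.

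Second, the inductive step itself has a gap. To isolate $(f^n)^{(m)}(\psi_i(w))$ from Fa\`a di Bruno applied to $f^{p_in_0}\circ\psi_i=\psi_i(\Lambda_i^{n_0}\,\cdot)$ you must subtract the terms involving $(f^n)^{(j)}(\psi_i(w))$ for $j<m$; the inductive hypothesis controls these only up to multiplicative errors $e^{o(d^n)}$ and only off a null set, so it cannot exclude cancellation between the top term and the rest. That gives at best an upper bound for $|(f^n)^{(m)}|$, whereas the theorem needs the \emph{lower} bound (an excess of zeros means the potential is too negative), which is exactly what your asserted $(1+o(1))$ identity presupposes. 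Moreover the estimate you identify as the crux is not the paper's Theorem \ref{th:convergence}: that theorem concerns the Koenigs/Fatou coordinate $\Phi$ on an attracting or parabolic basin and yields a locally uniform limit of $(f^n)^{(t)}/(f^n)'$ with an explicit error rate, with no exceptional set and no Nevanlinna theory. What your route requires --- $\tfrac{1}{d^n}\log|\psi_i^{(m)}(\zeta)/\psi_i(\zeta)|\to0$ for the finite-order entire linearizer of a repelling point, pointwise off a planar null set along the prescribed radii $|\zeta|=|\Lambda_i|^{n_0}$ where $\log|\psi_i|$ is already of size $d^n$ --- is a different and unproved statement: the lemma on the logarithmic derivative controls an integral average over circles with an exceptional set of radii, and upgrading it to the pointwise form you need is itself a substantial open step.
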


For a meromorphic function $f$ on $\bC$, 
the Fatou set $F(f)$ of $f$ is defined by the region of normality
for the restriction 
$f:\bC\setminus\overline{\bigcup_{n\in\bN}f^{-n}(\infty)}
\to\bC\setminus\overline{\bigcup_{n\in\bN}f^{-n}(\infty)}$,
so that $f$ has no pole in $F(f)$.
To exclude easy cases and avoid superficial complications,
we assume that when $f$ extends to a rational function on $\bP^1$,
then $\deg >1$ and either $f$ is a polynomial or the above $F(f)$
coincides with the usual Fatou set in $\bP^1$ of this extended $f$.

The following 
approximation of the higher order derivatives $\Phi^{(t)}(z)=\frac{\rd^t}{\rd z^t}\Phi(z)$
of a solution $\Phi(z)$ of the Schr\"oder or Abel 
functional equation for a meromorphic function $f$ on $\bC$ having a 
locally uniform non-trivial error estimate
is also one of our principal results. This plays a key role in 
the proof of Theorem \ref{th:anyorder} and might be of independent interest.

\begin{mainth}\label{th:convergence}
 Let $f$ be a meromorphic function on $\bC$, and
 $a\in\bC$ be a fixed point of $f^p$ for some $p\in\bN$
 having the multiplier
 $\lambda:=(f^p)'(a)\in(\bD\setminus\{0\})\cup\{1\}$. 
 Then for every $t\in\bN\setminus\{1\}$,
\begin{gather}
 \frac{(f^n)^{(t)}}{(f^n)'}
 =\frac{\Phi^{(t)}}{\Phi'}
 +\begin{cases}
   \displaystyle O\bigl(|\lambda|^{\frac{n}{p}}\bigr) &\text{if }\lambda\in\bD\setminus\{0\},\\
   \displaystyle O\Bigl(\frac{1}{n}\Bigr) &\text{if }\lambda=1
  \end{cases}\quad\text{as }n\to+\infty
\label{eq:locunif}
\end{gather}
locally uniformly on $\Omega\setminus\{\Phi'=0\}$, where $\Omega\subset\bC$ is the 
either attracting or parabolic basin of $f$ associated to
$($the orbits under $f$ of the periodic point$)$ $a$, and where
the holomorphic surjection
$\Phi:\Omega\to\bC$ is a solution of the Schr\"oder or Abel equation
for $f$ associated to $($the orbits under $f$ of the periodic point$)$ $a$, respectively.
\end{mainth}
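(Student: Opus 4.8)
The plan is to fix a compact subset $L\subset\Omega\setminus\{\Phi'=0\}$ and to prove \eqref{eq:locunif} uniformly on $L$. Write $n=p\kappa+r$ with $\kappa=\kappa_n\in\bN$ and $r=r_n\in\{0,\dots,p-1\}$, so that $\kappa\to\infty$ and $f^n=f^r\circ f^{p\kappa}$; put $g:=f^r$. Iterating the functional equation for $\Phi$ associated to $a$ yields $\Phi\circ f^{p\kappa}=\lambda^\kappa\Phi$ in the attracting case and $\Phi\circ f^{p\kappa}=\Phi+\kappa$ in the parabolic case, on $\Omega$; moreover $f^{p\kappa}\to a_*$ uniformly on $L$ as $\kappa\to\infty$, where $a_*$ runs over the cycle of $a$ and is constant on each connected component of $L$. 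On $L$ the function $\Phi^{(t)}/\Phi'$ and, for $2\le k\le t$, the functions $B_{t,k}(\Phi',\dots,\Phi^{(t-k+1)})/\Phi'$ are bounded, where $B_{j,k}$ denotes the Bell polynomial occurring in the $k$-th summand of the Fa\`{a} di Bruno formula for the $j$-th derivative of a composition.

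The heart of the proof is the estimate $(f^{p\kappa})^{(t)}/(f^{p\kappa})'=\Phi^{(t)}/\Phi'+O(\varepsilon_n)$ on $L$, where $\varepsilon_n:=|\lambda|^\kappa$ in the attracting case and $\varepsilon_n:=1/\kappa$ in the parabolic case. Applying the Fa\`{a} di Bruno formula to $\Phi\circ f^{p\kappa}$ and isolating the $k=1$ summand gives, for every $j\in\bN$,
\[
 (\Phi'\circ f^{p\kappa})\cdot(f^{p\kappa})^{(j)}=\lambda^\kappa\Phi^{(j)}-\sum_{k=2}^{j}(\Phi^{(k)}\circ f^{p\kappa})\,B_{j,k}\bigl((f^{p\kappa})',\dots,(f^{p\kappa})^{(j-k+1)}\bigr)
\]
(with $\lambda^\kappa$ replaced by $1$ in the parabolic case). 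In the attracting case $|\lambda|<1$ and $\Phi'\circ f^{p\kappa}\to\Phi'(a_*)\neq0$ on $L$; starting from $(f^{p\kappa})'=\lambda^\kappa\Phi'/(\Phi'\circ f^{p\kappa})$ and arguing by induction on $j$ (the subtracted sum being a combination of Bell monomials each carrying at least two of the derivatives $(f^{p\kappa})^{(\bullet)}$) one obtains $(f^{p\kappa})^{(j)}=\lambda^\kappa\Phi^{(j)}/(\Phi'\circ f^{p\kappa})+O(|\lambda|^{2\kappa})$ on $L$ for each $j$. Dividing these identities for $j=t$ and $j=1$, and using that $\Phi'$ is bounded away from $0$ on $L$, yields the estimate with $\varepsilon_n=|\lambda|^\kappa$.

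In the parabolic case the same recursion is used, but $\Phi$ is now unbounded along the cycle and one must feed in the standard asymptotics of the Abel (Fatou) coordinate near a parabolic periodic point: with $\nu$ the petal order, on $L$ the quantity $|\Phi'\circ f^{p\kappa}|$ is comparable to $\kappa^{(\nu+1)/\nu}$, the quantity $|f^{p\kappa}-a_*|$ is comparable to $\kappa^{-1/\nu}$, and $(\Phi^{(k)}/\Phi')\circ f^{p\kappa}=O(\kappa^{(k-1)/\nu})$ for every $k\ge1$. Plugging these into the recursion gives $(f^{p\kappa})^{(j)}=(\Phi^{(j)}/(\Phi'\circ f^{p\kappa}))(1+O(1/\kappa))$ on $L$ for each $j$ (so in particular every $(f^{p\kappa})^{(j)}$ has modulus comparable to $\kappa^{-(\nu+1)/\nu}$ there), and dividing for $j=t$ and $j=1$ yields the estimate with $\varepsilon_n=1/\kappa$. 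I expect this quantitative control of the derivatives of the inverse Abel coordinate along the parabolic cycle to be the main obstacle; the attracting analogue is elementary, since there $\Phi$ is biholomorphic near each point of the cycle.

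Finally I pass from $f^{p\kappa}$ to $f^n=g\circ f^{p\kappa}$. The Fa\`{a} di Bruno formula applied to $g\circ f^{p\kappa}$, divided by $(f^n)'=(g'\circ f^{p\kappa})\cdot(f^{p\kappa})'$, gives
\[
 \frac{(f^n)^{(t)}}{(f^n)'}=\frac{(f^{p\kappa})^{(t)}}{(f^{p\kappa})'}+\sum_{k=2}^{t}\frac{g^{(k)}\circ f^{p\kappa}}{g'\circ f^{p\kappa}}\cdot\frac{B_{t,k}\bigl((f^{p\kappa})',\dots,(f^{p\kappa})^{(t-k+1)}\bigr)}{(f^{p\kappa})'}.
\]
Since the multiplier $\lambda=(f^p)'(a)$ is non-zero, $f'$ vanishes at no point of the cycle, hence neither does $g'=(f^r)'$; therefore for $\kappa$ large $g^{(k)}\circ f^{p\kappa}$ is bounded and $g'\circ f^{p\kappa}$ is bounded away from $0$ on $L$ (so in particular $(f^n)'$ does not vanish on $L$ for $n$ large). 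Moreover each Bell monomial in $B_{t,k}$ with $k\ge2$ involves at least two of the small derivatives $(f^{p\kappa})^{(\bullet)}$ whereas the denominator $(f^{p\kappa})'$ absorbs only one of them, so each summand is $O(\varepsilon_n)$ on $L$. Combining with the core estimate and noting $\kappa>n/p-1$, whence $|\lambda|^\kappa=O(|\lambda|^{n/p})$ and $1/\kappa=O(1/n)$, proves \eqref{eq:locunif} uniformly on $L$, and hence locally uniformly on $\Omega\setminus\{\Phi'=0\}$.
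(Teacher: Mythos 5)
Your argument is correct in outline and reaches the same identity-plus-error structure as the paper, but it is organized differently and, in the parabolic case, rests on a genuinely different key estimate. The paper writes $f^n=\Phi^{-1}\circ(\Phi\circ f^n)$ and differentiates \emph{that} composition, producing the explicit formula \eqref{eq:key} in which all the $n$-dependence sits in the factors $A_{s,u}(\lambda^{n/p}\Phi',\dots,\lambda^{n/p}\Phi^{(s)})$ and in the ratios $\dfrac{(\Phi^{-1})^{(u+1)}\circ\Phi\circ f^n}{(\Phi^{-1})'\circ\Phi\circ f^n}$; you instead apply Fa\`a di Bruno to $\Phi\circ f^{p\kappa}$, isolate the $k=1$ term, and solve recursively for $(f^{p\kappa})^{(j)}$ before composing with $f^r$. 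The two computations are algebraically dual, and in the attracting case both are elementary and complete (your induction giving $(f^{p\kappa})^{(j)}=\lambda^{\kappa}\Phi^{(j)}/(\Phi'\circ f^{p\kappa})+O(|\lambda|^{2\kappa})$ is sound, since the Bell monomials with $k\ge2$ carry at least two factors that are $O(|\lambda|^{\kappa})$ by induction and only derivatives of order $\le j-1$ enter). The real divergence is the parabolic case: the quantity you need, namely $(\Phi^{(k)}/(\Phi')^{k})\circ f^{p\kappa}=O(\kappa^{1-k})$ for $k\ge2$, is exactly the dual of the bound the paper proves, $\dfrac{(\Phi^{-1})^{(u+1)}}{(\Phi^{-1})'}\circ\Phi\circ f^n=O(n^{-u})$ for $u\ge1$. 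You obtain yours from the asymptotic expansion of the Fatou coordinate near the parabolic cycle ($|f^{p\kappa}-a_*|\asymp\kappa^{-1/\nu}$, $|\Phi'\circ f^{p\kappa}|\asymp\kappa^{(\nu+1)/\nu}$, etc.), whereas the paper obtains its bound with no reference to the petal order $\nu$ at all: it rescales $\Phi^{-1}$ to a univalent function on a disk of radius $R_n\sim n/p$ contained in the right half-plane inside $\Phi(P)$ and invokes the de Branges coefficient bound. The paper's route is self-contained and shorter; yours yields sharper (order-dependent) rates but at the cost of importing more machinery.

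That imported machinery is the one place where your write-up has a real gap, which you yourself flag: the uniform-on-compacta asymptotics for $\Phi'\circ f^{p\kappa}$, $\Phi^{(k)}\circ f^{p\kappa}$ and $f^{p\kappa}-a_*$ are not proved. They are standard, but making them rigorous requires (i) the differentiated asymptotic expansion of the Fatou coordinate, typically via Cauchy estimates on disks of radius $\varepsilon|w-a_*|$, which in turn requires knowing that the orbit of a compact set enters a subsector of the petal where such disks fit, and (ii) handling the logarithmic term in $\Phi$ when the iterative residue is nonzero (this does not change the leading orders, but it must be said). If you want to avoid all of this, replace your step (ii) by the paper's Koebe--de Branges argument: univalence of $\Phi^{-1}$ on large disks in $\Phi(P)$ immediately gives $\bigl|(\Phi^{-1})^{(u+1)}/(\Phi^{-1})'\bigr|\circ\Phi\circ f^{p\kappa}=O(\kappa^{-u})$, which feeds into your recursion in place of the petal-order asymptotics.
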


For the details on the Schr\"oder or Abel equations, see Subsection \ref{sec:preliminary}. We note that a statement similar to that in Theorem \ref{th:convergence} is also true for a polynomial-like mapping $f:U\to V$ for simply connected domains $U\Subset V$ in $\bC$
and that the general parabolic case of
$\lambda\in\{e^{2i\pi\theta}:\theta\in\bQ\}$
can be treated in a way similar to the above multiple case of $\lambda=1$.

\begin{notation}
A rational function $h\in\bC(z)$ of degree $>0$
induces a pullback operator $h^*$ on the space of complex Radon measures on $\bP^1$,
so that for every $a\in\bP^1$, 
\begin{gather*}
 h^*\delta_a=\sum_{w\in h^{-1}(a)}(\deg_w h)\delta_w\quad\text{on }\bP^1, 
\end{gather*}
where $\deg_z h$ is
the local degree of $h$ at each point $z\in\bP^1$ and $\delta_z$ denotes
the Dirac measure on $\bP^1$ at each point $z\in\bP^1$.
The open unit disk in $\bC$ is denoted as $\bD=\{z\in\bC:|z|<1\}$.
\end{notation}

\subsection*{Organization of the article}
In Section \ref{sec:preliminary}, we do
some preparatory computations on the higher order derivatives of iterations
of a meromorphic function on $\bC$ restricted to 
an either attracting or parabolic basin of it
using a solution of the Schr\"oder or Abel functional equation,
and show Theorem \ref{th:convergence}.
In Section \ref{sec:overC}, 
using Theorem \ref{th:convergence},
we show Theorem \ref{th:potential} below, which is equivalent
to Theorem \ref{th:anyorder}.

\section{Proof of Theorem \ref{th:convergence}}\label{sec:preliminary}
Let $f$ be a meromorphic function on $\bC$,
and recall the definition of the Fatou set $F(f)$ of $f$ 
in Section \ref{sec:intro}.
For the details on complex dynamics, see e.g.\ \cite{Milnor3rd}. 

\subsection{Schr\"oder and Abel functional equations}\label{sec:general}
If $f$ 
has an either (super)attracting or multiple (so in particular parabolic) 
periodic point 
$a\in\bC\setminus\bigcup_{n\in\bN}f^{-n}(\infty)$ having the period
\begin{gather*}
 p=p_{f,a}\in\bN,
\end{gather*}
then the multiplier
\begin{gather*}
 \lambda=\lambda_{f,a}:=(f^p)'(a)
\end{gather*}
of $f^p$ at the fixed point $a$ is either in $\bD$ or equals $1$, 
respectively, and
the (super)attracting or parabolic basin 
\begin{gather*}
 \Omega=\Omega_{f,a}:=\Bigl\{z\in F(f):\lim_{n\to+\infty}f^{pn+j}(z)=a\quad\text{for some }j\in\bN\cup\{0\}\Bigr\} 
\end{gather*}
of $f$ associated to (the orbits under $f$ of the periodic point) $a$ is completely invariant under $f$.

If in addition $\lambda\neq 0$, 
that is, the above fixed point $a$ of $f^p$ is not superattracting,
then by the Koenigs-Schr\"oder-Poincar\'e theorem or
the Leau-Fatou flower theorem, respectively, there is a holomorphic surjection 
\begin{gather*}
 \Phi=\Phi_{f,a}:\Omega\to\bC
\end{gather*}
satisfying the Schr\"oder or Abel equation
\begin{gather}
\begin{cases}
 \Phi\circ f=\lambda^{1/p}\cdot\Phi\quad\text{on }\Omega
\quad\text{and}
\quad\Phi'(f^j(a))\neq 0\text{ for every }
j\in\{0,\ldots,p-1\} & \text{if }\lambda\in\bD\setminus\{0\},\\
 \Phi\circ f=\Phi+\frac{1}{p}\quad\text{on }\Omega 
\quad\text{and }\Phi\text{ is injective on }P & \text{if }\lambda=1
\end{cases}\label{eq:functional}
\end{gather}
for $f$ associated to (the orbits under $f$ of the periodic point) $a$, where a $p$-th root $\lambda^{1/p}$ of $\lambda$ is fixed and
where $P$ is any attracting petal of $f^p$ associated to one of the at most finitely many
attracting directions towards the multiple fixed point $f^j(a)$ of $f^p$ for some $j\in\{0,\ldots,p-1\}$,
respectively; this 
$\Phi$ is unique up to multiplication in $\bC^*$ or addition in $\bC$, respectively.

\subsection{Preparatory computations} 
Suppose that $\lambda\in(\bD\setminus\{0\})\cup\{1\}$,
and fix a $p$-th root $\lambda^{1/p}\in\bigl(\bD\setminus\{0\}\bigr)\cup\{1\}$ of $\lambda$, 
assuming $\lambda^{1/p}=1$ when $\lambda=1$ by convention. 
By \eqref{eq:functional}, we have
\begin{gather}
(\Phi\circ f^n)'
=(\Phi'\circ f^n)\cdot(f^n)'=\lambda^{\frac{n}{p}}\cdot\Phi'\quad\text{on }\Omega\label{eq:chain}
\end{gather}
for every $n\in\bN$, using the second equality in which,
we have
\begin{gather}
\bigl\{\Phi'=0\bigr\}=\bigcup_{n\in\bN\cup\{0\}}f^{-n}\bigl(\{z\in\Omega:f'(z)=0\}\bigr).\label{eq:zerosbasin}
\end{gather}
Let us write $f^n=\Phi^{-1}\circ(\Phi\circ f^n)$, so that
$(f^n)'=((\Phi^{-1})'\circ\Phi\circ f^n)\cdot(\Phi\circ f^n)'$
for $n\gg 1$
locally uniformly on $\Omega$, and then
for every $t\in\bN$,
\begin{gather}
(f^n)^{(t)}
=\sum_{s=0}^{t-1}\begin{pmatrix}
		  t-1\\
		  s
		 \end{pmatrix}
\bigl((\Phi^{-1})'\circ\Phi\circ f^n\bigr)^{(s)}
\cdot\bigl(\Phi\circ f^n\bigr)^{(t-s)}\quad\text{for }n\gg 1\label{eq:binomial}
\end{gather}
locally uniformly on $\Omega$; here
once and for all, 
\begin{itemize}
 \item  when $\lambda\in\bD\setminus\{0\}$,
for each $j\in\{0,\ldots,p-1\}$,
we fix a topological open disk in $\Omega$ 
which contains the attracting
       fixed point $f^j(a)$ of $f^p$
       and is mapped 
 by $\Phi$ injectively onto an open round disk in $\bC$
 centered at $0(=\Phi(f^j(a)))$, or 
 \item when $\lambda=1$,
for each $j\in\{0,\ldots,p-1\}$ and
to each one say $\Vv$ of at most finitely many 
attracting directions of $f^p$ towards
the multiple fixed point $f^j(a)$,
we fix 
an attracting petal of $f^p$ which is associated 
to $\Vv$,
\end{itemize}
so that 
the function $\Phi^{-1}$ is the inverse of the restriction $\Phi|U$
of $\Phi$ to some $U$ among those 
$p$ topological open disks 
or at most finitely many attracting petals.

Let us define some polynomials
\begin{gather*}
 A_{s,u}\in\bZ[X_0,\ldots,X_s],\quad s\in\{0\}\cup\bN\text{ and }u\in\{0,\ldots,s\},
\end{gather*}
by $A_{0,0}:\equiv 1$ and,
inductively for $s\in\bN$, by the equality
\begin{multline*}
A_{s,u}(X_0,\ldots,X_s)
=A_{s-1,u-1}(X_0,\ldots,X_{s-1})\cdot X_0+\\
+\sum_{q=0}^{s-1}
\bigl((\partial_{X_q}(A_{s-1,u}))(X_0,\ldots,X_{s-1})\bigr)\cdot X_{q+1}
\quad\text{for }u\in\{0,\ldots,s\}, 
\end{multline*}
also setting the auxiliary $A_{s,-1}:\equiv 0$ for every $s\in\{0\}\cup\bN$
and $A_{s-1,s}:\equiv 0$ for every $s\in\bN$;
then for every $s\in\bN$ and every $u\in\{0,\ldots,s\}$,
\begin{gather}
 A_{s,u}(0,\ldots,0)=0,\label{eq:vanishconst} 
\end{gather}
and
we indeed have not only $A_{s,u}\in\bZ[X_0,\ldots,X_{s-1}]$
for every $s\in\bN$ and every $u\in\{0,\ldots,s\}$ but also
the stronger
\begin{gather}
 A_{s,0}\equiv 0\quad\text{for every }s\in\bN\label{eq:vanish}
\end{gather}
than \eqref{eq:vanishconst} when $u=0$, inductively.

Then for every $s\in\{0\}\cup\bN$, we inductively have
\begin{gather*}
\bigl((\Phi^{-1})'\circ\Phi\circ f^n\bigr)^{(s)}
=\sum_{u=0}^s
\bigl((\Phi^{-1})^{(u+1)}\circ\Phi\circ f^n\bigr)
\cdot A_{s,u}\bigl((\Phi\circ f^n)',\ldots,(\Phi\circ f^n)^{(s)}\bigr)
\quad\text{for }n\gg1
\end{gather*}
locally uniformly on $\Omega$(, reading 
$A_{0,0}(*,\ldots,*)\equiv 1$ here and below).
Hence for every $t\in\bN$, also by \eqref{eq:binomial} and
\eqref{eq:chain}, we have
\begin{multline*}
 (f^n)^{(t)}=\sum_{s=0}^{t-1}
 \begin{pmatrix}
  t-1\\
  s
 \end{pmatrix}\times\\
\times\biggl(\sum_{u=0}^s
\bigl((\Phi^{-1})^{(u+1)}\circ\Phi\circ f^n\bigr)
\cdot A_{s,u}\bigl(\lambda^{\frac{n}{p}}\cdot\Phi',\ldots,\lambda^{\frac{n}{p}}\cdot\Phi^{(s)}\bigr)\biggr)
\cdot\lambda^{\frac{n}{p}}\cdot\Phi^{(t-s)}\quad\text{for }n\gg 1
\end{multline*}
locally uniformly on $\Omega$, 
which together with \eqref{eq:vanish} implies
that, for every $t\in\bN\setminus\{1\}$,
\begin{multline}
\frac{(f^n)^{(t)}}{(f^n)'}=\frac{\Phi^{(t)}}{\Phi'}
+\sum_{s=1}^{t-1}\begin{pmatrix}
		  t-1\\
		  s
		 \end{pmatrix}\times\\
\times\biggl(\sum_{u=1}^s
\frac{(\Phi^{-1})^{(u+1)}\circ\Phi\circ f^n}{(\Phi^{-1})'\circ\Phi\circ f^n}
\cdot A_{s,u}\bigl(\lambda^{\frac{n}{p}}\cdot\Phi',\ldots,\lambda^{\frac{n}{p}}\cdot\Phi^{(s)}\bigr)
\biggr)\cdot\frac{\Phi^{(t-s)}}{\Phi'}
\quad\text{for }n\gg 1\label{eq:key}
\end{multline}
locally uniformly on $\Omega\setminus\{\Phi'=0\}$.

\begin{proof}[Proof of Theorem \ref{th:convergence}]
 If $\lambda\in\bD\setminus\{0\}$ (so $0<|\lambda|^{1/p}<1$), 
 then 
 we note that $\Phi\circ f^n=\lambda^{\frac{n}{p}}\cdot\Phi$ 
 on $\Omega$ by \eqref{eq:functional},
 that for every $s\in\bN$ and every $u\in\{1,\ldots,s\}$, 
\begin{gather*}
 A_{s,u}\bigl(\lambda^{\frac{n}{p}}\cdot\Phi',\ldots,\lambda^{\frac{n}{p}}\cdot\Phi^{(s)}\bigr)
  =O(|\lambda|^{\frac{n}{p}})\quad\text{as }n\to\infty 
\end{gather*} 
locally uniformly on $\Omega$ by \eqref{eq:vanishconst}, and that 
\begin{gather*}
 |(\Phi^{-1})'\circ\Phi\circ f^n|
>\frac{1}{2}\min\biggl\{\frac{1}{|\Phi'(f^j(a))|}:j\in\{0,\ldots,p-1\}\biggr\}>0\quad\text{for }n\gg 1
\end{gather*}
locally uniformly on $\Omega$ using the inverse function theorem.
Hence for every $t\in\bN\setminus\{1\}$, we have \eqref{eq:locunif} by \eqref{eq:key} in this case. 

On the other hand, if $\lambda=1$ (so $A_{s,u}(\lambda^{\frac{n}{p}}\cdot\Phi',\ldots,\lambda^{\frac{n}{p}}\cdot\Phi^{(s)})=A_{s,u}(\Phi',\ldots,\Phi^{(s)})$
under our convention that $\lambda^{1/p}=1$), then 
we note that $\Phi\circ f^n=\Phi+\frac{n}{p}$ on $\Omega$
by \eqref{eq:functional} 
and that the image $\Phi(P)$ under $\Phi$ of 
each $P$ among the attracting petals of $f^p$ that have been fixed once and for all contains a right half plane in $\bC$.
Hence for every $u\in\bN$, we have
 \begin{multline*}
 \limsup_{n\to+\infty}\Bigl|\frac{(\Phi^{-1})^{(u+1)}\circ\Phi\circ f^n}{(\Phi^{-1})'\circ\Phi\circ f^n}(\cdot)\Bigr|\cdot\frac{\bigl(R_n(\cdot)\bigr)^u}{(u+1)!}\\
 =\limsup_{n\to+\infty}\frac{\bigl|(\Phi^{-1})^{(u+1)}\bigl(\Phi(\cdot)+\frac{n}{p}\bigr)\cdot\bigl(R_n(\cdot)\bigr)^{u+1}/(u+1)!\bigr|}{\bigl|(\Phi^{-1})'\bigl(\Phi(\cdot)+\frac{n}{p}\bigr)\cdot R_n(\cdot)\bigr|}(\le u+1)<+\infty
\quad\text{for }n\gg 1
\end{multline*}
locally uniformly on $\Omega$
by choosing a function $R_n:\Omega\to\bR$
satisfying $R_n(\cdot)=\frac{n}{p}+O(1)$ as $n\to\infty$ locally uniformly 
on $\Omega$ so that for each $z_0\in\Omega$, 
the function
\begin{gather*}
\zeta\mapsto\Phi^{-1}\biggl(\Bigl(\Phi(z_0)+\frac{n}{p}\Bigr)+\bigl(R_n(z_0)\bigr)\cdot\zeta
\biggr)-\Phi^{-1}\Bigl(\Phi(z_0)+\frac{n}{p}\Bigr),\quad\text{which fixes }\zeta=0,
\end{gather*}
is defined and univalent on $\bD$ for $n\gg 1$, and then
applying e.g.\ the de Branges theorem to
the $(u+1)$-th coefficient of the Taylor expansion around $\zeta=0$ of
this univalent function on $\bD$; for the details on injective holomorphic (i.e.\ univalent) functions
 on $\bD$ including the de Branges theorem, see e.g.\ \cite{Haymanmult}.
 Hence for every $t\in\bN\setminus\{1\}$, we have \eqref{eq:locunif} by \eqref{eq:key}
 in this case.
\end{proof}

\section{Proof of Theorem \ref{th:anyorder}}\label{sec:overC}

Let $f\in\bC[z]$ be a polynomial of degree $d>1$. 
From now on, we denote by $m_2$ the real $2$-dimensional Lebesgue measure on $\bC$.
\begin{mainth}\label{th:potential}
If $E(f)=\{\infty\}$, then for every $m\in\bN$,
\begin{gather}
 \lim_{n\to+\infty}\frac{\log\bigl|(f^n)^{(m)}\bigr|}{d^n-m}=g_f\quad\text{in }L^1_{\loc}(\bC,m_2).\label{eq:potential}
\end{gather}
\end{mainth}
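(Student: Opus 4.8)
The plan is to prove Theorem \ref{th:potential}, from which Theorem \ref{th:anyorder} follows by taking $\Delta$ of both sides of \eqref{eq:potential} (the operator $\Delta$ is continuous on $L^1_{\loc}$ into the space of distributions, and $\Delta(\log|(f^n)^{(m)}|) = 2\pi\sum_{(f^n)^{(m)}(w)=0}(\mathrm{mult}_w)\delta_w = 2\pi((f^n)^{(m)})^*\delta_0$, while $\Delta g_f = \mu_f$, up to the normalization by $d^n-m$ and the usual $2\pi$). So I focus on \eqref{eq:potential}.

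First I would recall the standard fact that $(\log\max\{1,|f^n|\})/d^n \to g_f$ locally uniformly on $\bC$ (indeed the difference is $O(d^{-n})$, as already noted in the introduction), hence a fortiori in $L^1_{\loc}$; and since $\log|f^n| \le \log\max\{1,|f^n|\}$ with the two differing only where $|f^n|<1$, a subharmonicity/compactness argument gives $(\log|f^n|)/d^n \to g_f$ in $L^1_{\loc}(\bC,m_2)$ as well. The strategy is then to compare $\log|(f^n)^{(m)}|$ with $\log|(f^n)'|$ and the latter with $\log|f^n|$. For the second comparison, write $(f^n)' = \prod_{j=0}^{n-1}(f'\circ f^j)$ by the chain rule, so $\log|(f^n)'| = \sum_{j=0}^{n-1}\log|f'\circ f^j|$; dividing by $d^n$ and using Brolin's theorem \eqref{eq:Brolin} together with the fact that $\int \log|f'|\,\rd\mu_f$ is finite (here $E(f)=\{\infty\}$ ensures $f'$ is not identically problematic and $\mu_f$ charges no exceptional point), one shows $(\log|(f^n)'|)/d^n \to g_f$ in $L^1_{\loc}$: the dominant term is $j = n-1$ contributing $\log|f'\circ f^{n-1}|/d^n$, and $\log|f'\circ f^{n-1}|/d^{n-1} = (f^{n-1})^*(\log|f'|)/d^{n-1} \to g_f \cdot (\text{stuff})$ — more precisely one uses that $\log|f'| = (\deg f - 1)\cdot(\text{something}) + $ bounded, or simply that $\log|f'\circ f^{n-1}|/(d^n-m) \to g_f$ because $\deg((f^n)') = d^n-1$ and $\log|(f^n)'|$ is a potential for $((f^n)')^*\delta_0$ whose normalized version converges by the $m=1$ case already established in \cite{GV16derivative, OkuyamaDerivatives, OVderivative}.

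The heart of the matter — and the reason Theorem \ref{th:convergence} was proved — is the first comparison: reducing the $m$-th derivative to the first. Write
\begin{gather*}
\frac{\log|(f^n)^{(m)}|}{d^n-m} = \frac{\log|(f^n)'|}{d^n-m} + \frac{1}{d^n-m}\log\left|\frac{(f^n)^{(m)}}{(f^n)'}\right|,
\end{gather*}
and we already know (by the $m=1$ case, plus $d^n-1 \sim d^n-m$) that the first term tends to $g_f$ in $L^1_{\loc}$. It thus suffices to show the second term tends to $0$ in $L^1_{\loc}(\bC,m_2)$. Here I would invoke Theorem \ref{th:convergence}, applied on each attracting or parabolic basin $\Omega$ of $f$ (associated to some non-repelling, non-superattracting cycle): on $\Omega\setminus\{\Phi'=0\}$ one has $(f^n)^{(m)}/(f^n)' = \Phi^{(m)}/\Phi' + O(|\lambda|^{n/p})$ or $+O(1/n)$, so $\log|(f^n)^{(m)}/(f^n)'|$ stays locally bounded (away from the zeros and poles of the limit $\Phi^{(m)}/\Phi'$, which form a measure-zero set) and divided by $d^n-m$ goes to $0$ pointwise a.e., hence in $L^1_{\loc}(\Omega)$ by dominated convergence once a local $L^1$-domination is arranged. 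On the Julia set $J(f) = \partial K(f)$, which has $m_2$-measure governed by the fact that $\mu_f$ has no atoms, and on the complement $\bC\setminus K(f) = \{g_f>0\}$ (where $f$ escapes to $\infty$ and $|(f^n)^{(m)}|$ and $|(f^n)'|$ are both comparable to powers of $|f^n|$), a direct estimate via the Böttcher coordinate near $\infty$ shows $\log|(f^n)^{(m)}/(f^n)'|$ is $O(d^n)\cdot o(1)$, i.e. $o(d^n)$, locally uniformly. Patching the estimates over the finitely many basins, the escaping region, and the Julia set (whose boundary contributions are negligible because the relevant functions are in $L^1_{\loc}$ and the bad sets are thin), yields the second term $\to 0$ in $L^1_{\loc}$.

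The main obstacle I anticipate is controlling $\log|(f^n)^{(m)}/(f^n)'|$ uniformly \emph{near} the bad sets: the zeros $\{\Phi'=0\}$ (equivalently the grand orbit of critical points in $\Omega$, by \eqref{eq:zerosbasin}), the zeros and poles of the limiting ratio $\Phi^{(m)}/\Phi'$, the Julia set, and the interface between Fatou and Julia sets. The resolution should be that $L^1_{\loc}$ convergence is robust to such thin exceptional sets: one establishes a uniform local $L^1$-bound for $\frac{1}{d^n-m}\log|(f^n)^{(m)}/(f^n)'|$ — using that $\log|(f^n)^{(m)}|$ and $\log|(f^n)'|$ are subharmonic (indeed $\log$ of a polynomial modulus), hence their local $L^1$-norms are controlled by their values at a few points plus the Riesz masses $d^n-m$ and $d^n-1$, which after dividing by $d^n-m$ are bounded — and then upgrades a.e. convergence to $L^1_{\loc}$ convergence via a Vitali/uniform-integrability argument. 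A secondary subtlety is that Theorem \ref{th:convergence} is stated only on the non-superattracting basins; the superattracting basins (including the basin of $\infty$) and the case where $f$ has no attracting/parabolic cycle at all must be handled separately, but on the basin of $\infty$ one has the Böttcher coordinate and on a bounded superattracting basin one has an analogous Böttcher-type normal form, in either case giving $(f^n)^{(m)}/(f^n)'$ bounded by a polynomial in the local coordinate, so $\log$ of it is $o(d^n)$ there too; and if the Fatou set is empty, $K(f) = J(f)$ has empty interior and the estimate on $\{g_f>0\}$ alone suffices since $m_2(K(f))$ contributes a set on which the uniform-integrability bound already forces the normalized logarithm to have small $L^1$-norm.
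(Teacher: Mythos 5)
Your local ingredients are essentially the paper's: reduce the $m$-th derivative to the first via the ratio $(f^n)^{(m)}/(f^n)'$ and Theorem \ref{th:convergence}, and get the $m=1$ case from Brolin's theorem. But your global assembly --- establish pointwise convergence on each dynamical piece and patch by uniform integrability --- differs from the paper's (which argues by contradiction through H\"ormander's compactness principle) and has two genuine gaps. The first is that your case analysis of the Fatou set omits Siegel disks: a bounded Fatou component of a polynomial need not be pre-attracting, pre-parabolic or pre-superattracting, and on a Siegel disk there is no Schr\"oder, Abel or B\"ottcher coordinate to which Theorem \ref{th:convergence} or your normal-form estimates apply, and no escape to $\infty$; none of your local arguments yields the needed lower bound $\liminf_n\frac{1}{d^n-m}\log|(f^n)^{(m)}|\ge 0$ there. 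The paper excludes rotation domains not by a local computation but by the fact (quoted from \cite{OVderivative}) that on any component $U$ of $\{\phi<0\}$ one has $f^{n_j}\to a_U$ locally uniformly for a \emph{constant} $a_U$, which is impossible on a Siegel disk; this step has no counterpart in your write-up.

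The second gap is your treatment of the residual set. The Julia set of a polynomial with $E(f)=\{\infty\}$ can have positive Lebesgue measure (Buff--Ch\'eritat), and non-atomicity of $\mu_f$ says nothing about $m_2(J(f))$; similarly $K(f)=J(f)$ with empty interior can still have positive area. A Vitali/uniform-integrability argument controls contributions from sets of \emph{small} measure, so it cannot absorb a positive-measure set on which you have proved no convergence at all. The mechanism that actually handles $J(f)$ is that any $L^1_{\loc}$-subsequential limit $\phi$ of $\frac{1}{d^n-m}\log|(f^n)^{(m)}|$ is \emph{subharmonic}, equals $g_f$ on $I_\infty(f)$, and hence by upper semicontinuity satisfies $\phi\ge 0=g_f$ pointwise on $\partial K(f)\subset\overline{I_\infty(f)}$, irrespective of the area of $J(f)$. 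Your decomposition $\log|(f^n)^{(m)}|=\log|(f^n)'|+\log|(f^n)^{(m)}/(f^n)'|$ destroys the subharmonicity of the term you need to control, so this mechanism is unavailable; one must run the compactness argument on $\log|(f^n)^{(m)}|$ itself, which is what the paper does. (A lesser issue: your B\"ottcher normal-form claim on bounded superattracting basins is asserted but not proved; the paper sidesteps it entirely by the inductive reduction from $m$ to $m-1$ using $(f^{n_j})^{(m-1)}(z_0)=0$ at a preimage of the superattracting periodic point.)
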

Here are a few remarks. For each $m\in\bN$, 
the convergence \eqref{eq:potential} implies
\eqref{eq:zeros}
by a continuity of the Laplacian $\Delta$ and the equalities
\begin{gather*}
\Delta\frac{\log\bigl|(f^n)^{(m)}\bigr|}{d^n-m}=\frac{\bigl((f^n)^{(m)}\bigr)^*\delta_0}{d^n-m}\quad\text{on }\bC\quad\text{for each }n\in\bN 
\end{gather*}
and $\Delta g_f=\mu_f$ on $\bC$.
Conversely, the convergence \eqref{eq:zeros} implies \eqref{eq:potential} since
the difference $g_f-(\log|(f^n)^{(m)}|)/(d^n-m)$ on $\bC$ 
not only extends harmonically near $\infty\in\bP^1$ 
for every $n\in\bN$
but also tends to $0$ as $n\to+\infty$ at the point $\infty$, as mentioned in Section \ref{sec:intro}.
As pointed out in \cite{OkuyamaDerivatives}, 
if $E(f)=\{\infty\}$, then the convergence
\eqref{eq:zeros} in the $m=1$ case
follows from the foundational ``$m=0$ case'' \eqref{eq:Brolin}; indeed,
no matter whether $E(f)=\{\infty\}$,
noting that $(f^n)'(z)=\prod_{j=0}^{n-1}f'(f^j(z))\in\bC[z]$ by the chain rule, 
for any integers $n\gg M\ge 0$, we compute
\begin{multline*}
 \frac{((f^n)')^*\delta_0}{d^n}-\mu_f
=\frac{1}{d^n}\sum_{c\in\bC:\,f'(c)=0}
\biggl(\sum_{j=0}^M(f^j)^*\delta_c
+\sum_{j=M+1}^{n-1}d^j\cdot\Bigl(\frac{(f^j)^*\delta_c}{d^j}-\mu_f\Bigr)
\biggr)
-d^{-n+M+1}\cdot\mu_f 
\end{multline*}
on $\bP^1$, the first sum in the right hand side in which
ranges over all $d-1$ critical points $c\in\bC$ of $f$
taking into account their multiplicities, and we note that
$(d-1)\cdot\sum_{j=M+1}^{n-1}d^j=d^n-d^{M+1}$.
If in addition $E(f)=\{\infty\}$, then we have
$(f^k)^*\delta_c/d^k-\mu_f\to 0$ weakly on $\bP^1$ as $k\to+\infty$
for every $c\in\bC$ (such that $f'(c)=0$) by \eqref{eq:Brolin}, so that
the convergence \eqref{eq:zeros} in the $m=1$ case 
holds making $n\to+\infty$ and in turn $M\to+\infty$ in the above displayed equality.

From now on, fix $m\in\bN$. 

\subsection{Observation from \cite{OVderivative}}
In \cite{OVderivative}, 
we have seen that the sequence $((\log|(f^n)^{(m)}|)/(d^n-m))_n$
of subharmonic functions on $\bC$ is locally uniformly upper bounded on $\bC$ 
and tends to $g_f$ as $n\to+\infty$ locally uniformly on 
$I_\infty(f)\setminus\bigcup_{n\in\bN\cup\{0\}}f^{-n}(C(f))$,
where 
\begin{gather*}
 I_\infty(f):=
\bP^1\setminus K(f)(=\{g_f>0\})
\end{gather*}
is the (immediate) superattracting basin
of $f$ associated to the superattracting fixed point $\infty$, and where 
$C(f):=\bigl\{z\in\bC:f'(z)=0\bigr\}\cup\{\infty\}$ 
is the set of all the critical (or branched) points of $f$
(as a holomorphic endomorphism of $\bP^1$) and 
$\#\bigl(C(f)\setminus\{\infty\}\bigr)\le d-1$.
Then by H\"ormander's {\em compactness principle} 
(\cite[Theorem 4.1.9(a)]{Hormander83}),
there are a subharmonic function $\phi$ on $\bC$ and
a strictly increasing sequence $(n_j)$ in $\bN$ tending to $+\infty$ as $j\to+\infty$ such that 
\begin{gather}
 \phi=\phi_m=\lim_{j\to+\infty}\frac{\log\bigl|(f^{n_j})^{(m)}\bigr|}{d^{n_{j}}-m}
\quad\text{in }L^1_{\operatorname{loc}}(\bC,m_2). 
\label{eq:subsequence}
\end{gather}

In \cite{OVderivative}, we then have seen that 
\begin{gather*}
 \phi\le g_f\text{ on }\bC\quad\text{and}\quad\phi\equiv g_f\text{ on }\overline{I_\infty(f)}\setminus\{\infty\}.
\end{gather*}
Moreover, unless the open subset $\{\phi<g_f\}$ in $\bC$,
which is in $K(f)$ and on which $g_f\equiv 0$, is empty, we have seen that any component 
of $\{\phi<0\}$ is a bounded Fatou component say $U$ 
of $f$(, i.e., a component of the interior of $K(f)$),
so that for every compact subset $C$ in $U$,
by H\"ormander's version of Hartogs's lemma (\cite[Theorem 4.1.9(b)]{Hormander83})
and the upper semicontinuity of $\phi$, we have
\begin{gather}
\limsup_{j\to+\infty}\frac{\sup_C\log\bigl|(f^{n_j})^{(m)}\bigr|}{d^{n_j}-m}
\le\sup_C\phi<0,\label{eq:hartogssmall}
\end{gather}
and we have also seen that there is a constant $a=a_U\in\bC$ such that
\begin{gather}
 \lim_{j\to+\infty}f^{n_j}=a\quad\text{locally uniformly on }U,\label{eq:constlimit}
\end{gather}
taking a subsequence of $(n_j)$ if necessary. 

\subsection{Further observation using complex dynamics}
Unless $\{\phi<g_f\}=\emptyset$, for the $U$ and $a=a_U$ in the last paragraph
in the above observation, 
by the no wandering Fatou component theorem 
and the classification theorem of cyclic Fatou components 
for a rational function on $\bP^1$ of degree $>1$ 
due to Sullivan and Fatou, respectively,
there is 
a cycle of Fatou components of $f$ having the period say $p\in\bN$ 
into which $U$ is mapped by $f^{n_j}$ for $j\gg 1$,
and moreover
$a$ is an either (super)attracting or multiple fixed point of $f^p$
and this cycle of Fatou components of $f$ is either
the immediate (super)attracting basin of $f$ or in the
immediate parabolic basin of $f$ associated to (the orbits under $f$ of) $a$.

\begin{proof}[Proof of Theorem \ref{th:potential}]
Let $f\in\bC[z]$ be a polynomial of degree $d>1$. 
Suppose that \eqref{eq:potential} does not hold for 
some $m\in\bN\setminus\{1\}$.
Under this assumption, there are a strictly increasing sequence 
$(n_j)$ in $\bN$ tending to $+\infty$
as $j\to+\infty$ and a subharmonic function 
$\phi=\phi_m$ on $\bC$ as in \eqref{eq:subsequence} for the $m$ 
such that the open subset $\{\phi<g_f\}$ in $K(f)$, on which $g_f\equiv 0$, is non-empty, and then
taking a subsequence of $(n_j)$ if necessary, 
there are $U,a=a_U$, and $p$
for this $\phi$ as in the above observations.
Recall the definition of the multiplier
$\lambda=\lambda_{f,a}:=(f^p)'(a)\in\bD\cup\{1\}$ 
and that of the (super)attracting or parabolic 
basin $\Omega=\Omega_{f,a}\subset\bC$,
of which $U$ is a component, and
the definition \eqref{eq:functional} of 
the holomorphic surjection $\Phi=\Phi_{f,a}:\Omega\to\bC$ 
associated to (the orbits under $f$ of) $a$ 
when $\lambda\in(\bD\setminus\{0\})\cup\{1\}$.

\subsection*{The case of $\lambda=0$} 
Suppose first that $\lambda=0$, that is, the fixed point $a$ of $f^p$ is superattracting. Then 
there is $z_0\in U\cap\bigcup_{n\in\bN\cup\{0\}}f^{-n}(a)$, so that
setting $N:=\min\{n\in\bN\cup\{0\}:f^n(z_0)=a\}$, we have
$\deg_{z_0}(f^{n_j})\ge(\deg_a(f^p))^{\frac{n_j-N}{p}-1}\to+\infty$ as $j\to+\infty$, so in particular
\begin{gather*}
 (f^{n_j})^{(m-1)}(z_0)=0\quad\text{for }j\gg 1.
\end{gather*}
Hence for $0<r\ll 1$, if $j\gg 1$, 
then using also $((f^n)^{(m-1)})'=(f^n)^{(m)}$, 
we have
\begin{gather*}
 \sup_{|z-z_0|\le r}\bigl|(f^{n_j})^{(m-1)}(z)\bigr|
=\sup_{|z-z_0|\le r}\bigl|(f^{n_{j}})^{(m-1)}(z)-(f^{n_{j}})^{(m-1)}(z_0)\bigr|
\le r\cdot\sup_{|z-z_0|\le r}\bigl|(f^{n_j})^{(m)}(z)\bigr|,
\end{gather*}
which with \eqref{eq:hartogssmall} yields
\begin{gather*}
\limsup_{j\to+\infty}\frac{\sup_{|z-z_0|\le r}\log\bigl|(f^{n_{j}})^{(m-1)}\bigr|}{d^{n_{j}}-(m-1)}
\le
\limsup_{j\to+\infty}\frac{\sup_{|z-z_0|\le r}\log\bigl|(f^{n_{j}})^{(m)}\bigr|}{d^{n_{j}}-m}
<0.
\end{gather*}

Now taking a subsequence of $(n_j)$ if necessary, for $m-1$,
the subharmonic function
\begin{gather*}
 \phi_{m-1}=\lim_{j\to+\infty}\frac{\log\bigl|(f^{n_j})^{(m-1)}\bigr|}{d^{n_{j}}-(m-1)}
\quad\text{in }L^1_{\operatorname{loc}}(\bC,m_2)
\end{gather*}
on $\bC$ as in \eqref{eq:subsequence} also
exists and is $<0\equiv g_f$ $m_2$-a.e.\ on $\overline{\bD(z_0,r)}\subset K(f)$,
so that \eqref{eq:potential} does not hold for $m-1$.
Inductively, \eqref{eq:potential} cannot hold for $m=1$, and we are done in this case of $\lambda=0$ by the remarks after 
Theorem \ref{th:potential}.

\subsection*{The case of $\lambda\neq 0$}
Suppose next that $\lambda\in(\bD\setminus\{0\})\cup\{1\}$. 
Then there is $z_0\in U\setminus\{\Phi'\cdot\Phi^{(m)}=0\}$
since the holomorphic surjection $\Phi:\Omega\to\bC$ 
does not extend to $\bC$ as a polynomial
by \eqref{eq:functional} and $\deg f>1$. 
Hence for $0<r\ll 1$, by 
Theorem \ref{th:convergence}
and \eqref{eq:hartogssmall}, we have
\begin{gather*}
 \limsup_{j\to+\infty}\frac{\sup_{|z-z_0|\le r}\log\bigl|(f^{n_j})'(z)\bigr|}{d^{n_j}-1}
=\limsup_{j\to+\infty}\frac{\sup_{|z-z_0|\le r}\log\bigl|(f^{n_j})^{(m)}(z)\bigr|}{d^{n_j}-m}<0,
\end{gather*}
and then \eqref{eq:potential} cannot hold for $m=1$ by an argument similar to
that in the second paragraph in the case of $\lambda=0$.
Now we are also done in this case of 
$\lambda\in(\bD\setminus\{0\})\cup\{1\}$
by the remarks after 
Theorem \ref{th:potential}.
\end{proof}

\begin{acknowledgement}
The author thanks the referee
for a very careful scrutiny and invaluable comments.
The author was partially supported by JSPS Grant-in-Aid 
for Scientific Research (C), 19K03541 and (B), 19H01798.
\end{acknowledgement}

\def\cprime{$'$}

\end{document}